\theoremstyle{plain}
\newtheorem{thm}[equation]{Theorem}
\newtheorem{lemma}[equation]{Lemma}
\newtheorem{cor}[equation]{Corollary}
\theoremstyle{definition}
\newtheorem{defn}[equation]{Definition}
\theoremstyle{remark}
\newtheorem{rem}[equation]{Remark}
\numberwithin{equation}{section}
\newcommand{\1}{\mathbf{1}}
\renewcommand{\d}{\mathrm{d}}
\newcommand{\E}{\mathbb{E}}
\renewcommand{\P}{\mathbb{P}}
\newcommand{\p}{\partial}
\renewcommand{\phi}{\varphi}
\newcommand{\R}{\mathbb{R}}
\newcommand{\pp}{\mathfrak{p}}
\title[Yukawa and Brown]{Yukawa Potential, Panharmonic Measure and Brownian Motion}
\date{\today}
\author{Antti Rasila}
\address{Antti Rasila\\
Aalto University\\
School of Science\\
Department of Mathematics and Systems Analysis\\
P.O.Box 1100\\
FIN-00076 Aalto\\
FINLAND}
\author{Tommi Sottinen}
\address{Tommi Sottinen\\
University of Vaasa\\
Faculty of Technology \\
Department of Mathematics and Statistics\\
P.O.Box 700\\
FIN-65101 Vaasa\\
FINLAND}
\thanks{T. Sottinen was partially funded by the Finnish Cultural Foundation (National Foundations' Professor Pool).}
\keywords{
Brownian motion;
Duffin correspondence;
harmonic measure;
Monte Carlo simulation;
panharmonic measure;
Walk On Spheres algorithm;
Yukawa equation}
\subjclass[2010]{60J45; 31C45}
\begin{document}

\begin{abstract}
In \cite{yrs1} a Walk On Spheres (WOS) algorithm for Monte Carlo simulation of the solutions of the Yukawa and the Helmholtz PDE's was developed by using the so-called Duffin correspondence.  In this paper we investigate the foundations behind the algorithm for the case of the Yukawa PDE.  We study the panharmonic measure that is a generalization of the harmonic measure for the Yukawa PDE.  We show that there are natural stochastic definitions for the panharmonic measure in terms of the Brownian motion and that the harmonic and the panharmonic measures are all mutually equivalent. Furthermore, we calculate their Radon--Nikodym derivatives explicitly for some balls, which is a key result behind the WOS algorithm.
\end{abstract}

\maketitle

\section{Introduction and Preliminaries}

The harmonic measure is a fundamental tool in geometric function theory, and it has interesting applications in the study of bounded analytic functions, quasiconformal mappings and potential theory. For example, the harmonic measure has proven very useful in study of quasidisks and related topics (see e.g. \cite{bishob-jones,gehring-hag,krzyz}). Results involving the harmonic measure have been given by numerous authors since 1930's (see \cite{garnett-marshall} and references therein). In this paper we shall consider the panharmonic measure, which is a natural counterpart of the classical harmonic measure, where the harmonic functions related are replaced with the smooth solutions to the \emph{Yukawa equation}
\begin{equation}\label{eq:yukawa}
\Delta u(x) = \mu^2 u(x), \quad \mu^2\ge 0.
\end{equation}
The equation (\ref{eq:yukawa}) first arose from the work of the Japanese physicist Hideki Yukawa in particle physics. Here $u\colon D\to \mathbb{R}$ is a two times differentiable function and $D\subset\R^n$, $n\ge 2$, is a domain. The Yukawa equation was first studied in order to describe the nuclear potential of a point charge. This model led to the concept of the Yukawa potential (also called a screened Coulomb potential), which satisfies an equation of the type \eqref{eq:yukawa}. The Yukawa equation also arises from certain problems related to optics, see \cite{inverse}. Obviously, when $\mu=0$ we have the Laplace equation and, indeed, the results given in this paper reduce to the classical ones.

Using the terminology of Duffin \cite{duffin1,duffin2}, we call a function $u\colon D\to\R$ {\it panharmonic}, or {\it $\mu$-panharmonic}, in a domain $D$ if its second derivatives are continuous and it satisfies the Yukawa equation \eqref{eq:yukawa} for all $x\in D$. The function $u$ is called panharmonic at $x_0\in D$ if there is a neighborhood of $x_0$ where $u$ is panharmonic.

In Definition \ref{defn:panharmonic} of the panharmonic measure below, and in all that follows, we shall always assume that $n\ge 2$, although some results are true in the dimension $n=1$, also.  For Definition \ref{defn:panharmonic} we need the notions of smallness and regularity of a domain.  This is best done by using the stochastic characterization via the Brownian motion. We refer to any of the classical textbooks \cite{chung-zhao,doob,port-stone} for further details.  

Recall that the $n$-dimensional Brownian motion $W=(W(t); t\ge 0)$ starting from the point $x\in\R^n$ is the time-homogeneous Markov process with the Markov semigroup  
$$
P(t)f(x) = \E^x\left[f\big(W(t)\big)\right] 
$$
given by
$$
P(t) = e^{t\frac12\Delta},
$$
i.e., $\frac12\Delta$ is the generator of the Markov semigroup of the Brownian motion.

A domain $D\subset \R^n$ is \emph{regular} if the Brownian motion does not 
dwell on its boundary; more precisely, $D$ is (Wiener) regular if 
$$
\P^x\left[\tau_{D^c}=0\right] = 1, \quad\mbox{for all }\, x\in\p D,
$$
where $\P^x$ is the probability measure under which $\P^x[W(0)=x]=1$ and 
$$
\tau_{D} = \inf\left\{ t> 0; W(t) \in D^c\right\}
$$
is the first hitting time of the Brownian motion in the set $D^c$. We call a regular domain $D$ (Wiener) \emph{small} if a Brownian motion starting inside $D$ eventually will leave the domain, i.e., $D$ is small if 
$$
\P^x\left[\tau_D <\infty\right]=1, \quad\mbox{ for all }\, x\in D. 
$$
For example, all bounded domains are small.  Also all half-spaces are small.

The panharmonic, or $\mu$-panharmonic measure, is a generalization of the harmonic measure:
  

\begin{defn}\label{defn:panharmonic}
Let $D\subset\R^n$ be a small regular domain and let $\mu^2\ge 0$. The \emph{$\mu$-panharmonic measure} on a boundary $\p D$ with a pole at $x\in D$ is the measure $H^x_\mu(D;\cdot)$ such that \emph{any bounded} $\mu$-panharmonic function $u$ on $\bar D$ admits the representation
\begin{equation}\label{eq:ph}
u(x) = \int_{y \in \p D} u(y)\, H^x_\mu(D;\d y).
\end{equation}
\end{defn}


The existence and uniqueness of panharmonic measure will be established by Theorem \ref{thm:kakutani} and Corollary \ref{cor:yukawa-escaping} later. Indeed, by Theorem \ref{thm:kakutani} all \emph{bounded} solutions to the Dirichlet problem $\Delta u -\mu^2u = 0$ on a \emph{small} regular domain with continuous and bounded boundary data are given by the panharmonic measure as in (\ref{eq:ph}). By Corollary \ref{cor:yukawa-escaping}, if $\mu^2>0$ then the assumption that the domain is small can be removed, i.e., all bounded solutions on a regular domain are of the form (\ref{eq:ph}) if the boundary data is bounded and continuous.  Of course, it is well known that there are unbounded solutions to the Laplace equation that do not admit the harmonic measure representation.  The same is true for the Yukawa equation. We refer to Evans \cite{evans} for more details on the solutions of the Laplace equation.

Note that if we replace the `killing parameter' $\mu^2$ in the Yukawa equation \eqref{eq:yukawa} with a `creation parameter' $\lambda <0$ we obtain another important partial differential equation, the \emph{Helmholtz equation}. In principle, the stochastic approaches taken in this paper can be applied to the solutions of the Helmholtz equation if the domain $D$ is small enough compared to the parameter $\lambda$. For details, we refer to Chung and Zhao \cite{chung-zhao}. If we replace $\mu^2$ by a (positive) function, we obtain the \emph{Schr\"odinger equation}.  Again, the stochastic approaches taken in this paper can be applied, in principle, to the Schr\"odinger equation, but the results may not be mathematically very tractable. Again, we refer to Chung and Zhao \cite{chung-zhao} for details. 

The rest of the paper is organized as follows: In Section \ref{sect:yb} we show three different connections between the panharmonic measures and the Brownian motion.  The first two (Theorem \ref{thm:kakutani} and Corollary \ref{cor:yukawa-killing}) are essentially well-known.  The third one (Corollary \ref{cor:yukawa-escaping}) is new. In Section \ref{sect:equivalence} we show that the panharmonic measures and the harmonic measures are all mutually equivalent (Theorem \ref{thm:panequivalence}) and provide some corollaries, viz. we provide a domination principle for the Dirichlet problem related to the Yukawa equation (Corollary \ref{cor:dominance}) and analogs of theorems of Riesz--Riesz, Makarov and Dahlberg for the panharmonic measures (Corollary \ref{cor:panequivalence}). In section \ref{sect:average} we consider the panharmonic measures on balls and prove an analogue of the Gauss mean value theorem, or the average property, for the panharmonic functions (Theorem \ref{thm:panharmonic_average}) and as a corollary we obtain the Liouville theorem for panharmonic functions (Corollary \ref{cor:liouville}).  Finally, in Section \ref{sect:discussion} we discuss extensions to the Schr\"odinger and the Helmholtz PDE's and the Walk On Spheres simulation of PDE's.


\section{Yukawa Equation and Brownian motion} \label{sect:yb}

Let us first recall the celebrated connection between the harmonic measure and the Brownian motion first noticed by Kakutani \cite{kakutani} in the 1940's: 
the harmonic measure is the \emph{hitting measure}:
\begin{equation}\label{eq:harmonic_kakutani}
H^x(D;\d y) = \P^x\left[W(\tau_D)\in \d y, \tau_D<\infty\right].
\end{equation}

Theorem \ref{thm:kakutani} below is a variant of the Kakutani connection (\ref{eq:harmonic_kakutani}). A key ingredient in the variant is the following disintegration of the harmonic measure on the time the associated Brownian motion hits the boundary $\p D$:

\begin{lemma}\label{lemma:harmonic_kernel}
Let $D\subset \R^n$ be a regular domain and $x\in D$. Then 
$$
H^x(D;\d y) = \int_{t=0}^{\infty} h^x(D;\d y, t)\, \d t,
$$
where 
\begin{equation}\label{eq:harmonic_kernel}
h^x(D;\d y,t) = \P^x\left[W(\tau_D)\in \d y\mid \tau_D=t\right]\frac{\d\P^x}{\d t}\left[ \tau_D\le t\right]
\end{equation}
is the \emph{harmonic kernel}.
\end{lemma}

\begin{proof}
First, we show the existence of the regular conditional distribution
\begin{equation}\label{eq:regular_law}
\pp^x(\d y\,|\,t) = \P^x\left[W(\tau_D)\in \d y\mid \tau_D=t\right].
\end{equation}
For this, we note that the random vector $(W(\tau_D),\tau_D)$ can be considered as a function from a space of continuous functions that are the Brownian trajectories equipped with the metric
$$
d(f,g) = \sum_{T=1}^\infty 2^{-T}\Big\|f\1_{[T-1,T)}-g\1_{[T-1,T)}\Big\|_\infty.
$$
For Brownian trajectories the metric $d$ is almost surely finite due to the independent increments of the Brownian motion and the Borel--Cantelli lemma.
Also, with the metric $d$, the space of Brownian paths is a Polish space. 
Now, by Theorem A1.2 of \cite{kallenberg} Polish spaces are Borel spaces.  Consequently, for any fixed $x\in D$, by Theorems 6.3 and 6.4 of \cite{kallenberg}, the probability kernel \eqref{eq:regular_law} exist and is measurable with respect to $t$.  Consequently, the harmonic kernel is measurable with respect to $t$.


Second, we show that the distribution of the hitting time $\tau_D$ is absolutely continuous with respect to the Lebesgue measure.  Let $\varepsilon>0$ be small enough so that $B=B(x,\varepsilon)\subset D$. Then $\tau_D = \tau_B + (\tau_D-\tau_B)$.  Now, the distribution of $\tau_B$ is absolutely continuous; see, e.g., the section of Bessel processes in Borodin and Salminen \cite{borodin-salminen}. Also, due to the rotation symmetry of the  Brownian motion, $\tau_B$ and $\tau_D-\tau_B$ are independent.  Hence, by disintegration and independence, we obtain that
\begin{eqnarray*}
\P^x[\tau_D \in \d t] &=& \P^x[\tau_{B} + (\tau_{D}-\tau_B) \in \d t] \\
&=& \int_{s=0}^\infty \P^x[t+(\tau_D-\tau_B) \in \d s\,|\, \tau_B=t]\,\P^x[\tau_B\in \d t] \\
&=&
\int_{s=0}^\infty \P^x[t+(\tau_D-\tau_B)\in \d s]\,\P^x[\tau_B\in \d t]\\
&=& \varphi^x(t)\,\P^x[\tau_B\in \d t]. 
\end{eqnarray*}
Thus, the distribution of $\tau_D$ is absolutely continuous, when the distribution of $\tau_B$ is absolutely continuous.

Third, we show that the formula \eqref{eq:harmonic_kernel} holds. By disintegrating and conditioning, and by using the continuity of the distribution of $\tau_D$, we obtain that
\begin{eqnarray*}
\lefteqn{\P^x\left[W(\tau_D)\in \d y, \tau_D<\infty\right] }\\
&=&
\int_{t=0}^{\infty} \P^x\left[W(\tau_D)\in \d y, \tau_D\in \d t\right] \\
&=&
\int_{t=0}^{\infty} \P^x\left[W(\tau_D)\in \d y\mid \tau_D=t\right]\P^x\left[ \tau_D\in \d t\right] \\
&=&
\int_{t=0}^{\infty} \P^x\left[W(\tau_D)\in \d y\mid \tau_D=t\right]\frac{\d\P^x}{\d t}\left[ \tau_D\le t\right]\, \d t.
\end{eqnarray*}
The claim follows now from the Kakutani connection (\ref{eq:harmonic_kakutani}).
\end{proof}


The following theorem \ref{thm:kakutani} is a version of the Kakutani theorem \cite{kakutani} for the Yukawa equation.  In some sense it is a special case of the Kakutani connection to the Schr\"odinger equation studied extensively by Chung and Zhao \cite{chung-zhao}.  However, it seems that this version with unbounded and non-small domain $D$ does not appear in any classical texts. 

\begin{thm}\label{thm:kakutani}
Let $D\subset\R^n$ be a regular domain and let $f:\p D \to \R$ be bounded and continuous. 
\begin{enumerate}
\item
Then
\begin{equation}\label{eq:discounting}
u(x) = \E^x\left[e^{-\frac{\mu^2}{2}\tau_D}f(W(\tau_D))\,;\, \tau_D<\infty\right] 
\end{equation}
is \emph{a} solution to the Yukawa--Dirichlet problem 
$$
\left\{
\begin{array}{rclll}
\Delta u &=& \mu^2u &\mbox{ on }& D, \\
u        &=& f      &\mbox{ on }& \p D. 
\end{array}
\right.
$$
\item
Moreover, if $u$ is bounded and $D$ is small then (\ref{eq:discounting}) is \emph{the only} solution to the Yukawa--Dirichlet problem.
\item
As a consequence, the harmonic measure admits the representation
\begin{equation}\label{eq:rep1}
H_\mu^x(D;\d y) = \int_{t=0}^{\infty} e^{-\frac{\mu^2}{2}t}\, h^x(D;\d y,t)\, \d t,
\end{equation}
where $h^x(D;\cdot,\cdot)$ is the harmonic kernel defined in (\ref{eq:harmonic_kernel}).
\end{enumerate}
\end{thm}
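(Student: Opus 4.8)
The plan is to recognize the formula (\ref{eq:discounting}) as the Feynman--Kac representation attached to the Yukawa operator. Since the infinitesimal generator of standard Brownian motion $W$ is $\tfrac12\Delta$, the Yukawa equation $\Delta u=\mu^2 u$ is the same as $\tfrac12\Delta u=\tfrac{\mu^2}{2}u$, a Schrödinger-type equation with the constant killing potential $V\equiv\mu^2/2$, and the discount factor $e^{-\frac{\mu^2}{2}\tau_D}$ equals $\exp(-\int_0^{\tau_D}V(W_s)\,\d s)$ for this $V$. Thus the whole statement falls under the Feynman--Kac theory of the Dirichlet problem as developed by Chung and Zhao \cite{chung-zhao}. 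The first step is to record the \emph{panharmonic mean value property}: by the strong Markov property of $W$, for every stopping time $\sigma\le\tau_D$ one has $u(x)=\E^x[e^{-\frac{\mu^2}{2}\sigma}\,u(W(\sigma))]$, and in particular $u(z)=\E^z[e^{-\frac{\mu^2}{2}\tau_B}\,u(W(\tau_B))]$ for every $z$ in a ball $B=B(x,\varepsilon)$ with $\bar B\subset D$, since then $\tau_B\le\tau_D$.

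Next I would show that $u$ is a classical solution on $D$. Fix such a small ball $B$. Classical potential theory for the modified Helmholtz (Yukawa) operator provides a smooth solution $\tilde u$ of $\Delta\tilde u=\mu^2\tilde u$ on $B$, continuous up to $\p B$, with $\tilde u=u$ on $\p B$. Applying Itô's formula to $e^{-\frac{\mu^2}{2}t}\tilde u(W_t)$, the drift term is $e^{-\frac{\mu^2}{2}t}(\tfrac12\Delta\tilde u-\tfrac{\mu^2}{2}\tilde u)(W_t)=0$, so $e^{-\frac{\mu^2}{2}(t\wedge\tau_B)}\tilde u(W_{t\wedge\tau_B})$ is a bounded martingale and optional stopping gives $\tilde u(z)=\E^z[e^{-\frac{\mu^2}{2}\tau_B}\tilde u(W(\tau_B))]=\E^z[e^{-\frac{\mu^2}{2}\tau_B}u(W(\tau_B))]=u(z)$ for every $z\in B$. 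Hence $u\equiv\tilde u$ on $B$, so $u$ is smooth and satisfies $\Delta u=\mu^2 u$ there, and as $B$ was arbitrary, on all of $D$.

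For the boundary condition I would use the regularity of $D$: since the Brownian motion started on $\p D$ leaves $D$ immediately, for $y\in\p D$ we have $\tau_D\to0$ and $W(\tau_D)\to y$ in $\P^x$-probability as $x\to y$, while $e^{-\frac{\mu^2}{2}\tau_D}\to1$ and $f$ is bounded; dominated convergence then yields $u(x)\to f(y)$ at every continuity point of $f$. For uniqueness, suppose $v\in C^2(\bar D)$ solves the Yukawa equation with $v=f$ on $\p D$. Itô's formula applied to $e^{-\frac{\mu^2}{2}t}v(W_t)$ shows, exactly as above, that $e^{-\frac{\mu^2}{2}(t\wedge\tau_D)}v(W_{t\wedge\tau_D})$ is a martingale; as $v$ is bounded on $\bar D$ and the discount factor is at most $1$, optional stopping and dominated convergence give $v(x)=\E^x[e^{-\frac{\mu^2}{2}\tau_D}v(W(\tau_D));\tau_D<\infty]=u(x)$.

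Finally, the representation (\ref{eq:rep1}) is a formal consequence. Disintegrating (\ref{eq:discounting}) over the value of $\tau_D$ and invoking the preceding lemma, which identifies $\P^x[W(\tau_D)\in\d y,\tau_D\in\d t]$ with $h^x(D;\d y,t)\,\d t$, Fubini's theorem gives $u(x)=\int_{y\in\p D}f(y)\bigl(\int_{t=0}^\infty e^{-\frac{\mu^2}{2}t}h^x(D;\d y,t)\,\d t\bigr)$. Comparing with the defining property of the panharmonic measure in Definition \ref{defn:panharmonic} and letting $f$ range over bounded functions on $\p D$ yields (\ref{eq:rep1}). The hard part will be the second paragraph: producing a genuine classical solution on balls and promoting the merely measurable probabilistic object $u$ to a $C^2$ function (the interior regularity, which in practice rests on the smoothness of the killed Brownian transition density), together with the delicate verification that the boundary data are attained at every boundary point, which is precisely where the regularity hypothesis on $D$ is indispensable.
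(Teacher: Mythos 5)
Your proposal is correct in outline, and for the part of the theorem that the paper actually proves it follows the same route: the representation (\ref{eq:rep1}) is obtained in both cases by disintegrating the expectation (\ref{eq:discounting}) over the value of $\tau_D$, invoking the preceding lemma which identifies $\P^x\left[W(\tau_D)\in\d y,\,\tau_D\in\d t\right]$ with $h^x(D;\d y,t)\,\d t$, and then using Fubini and the arbitrariness of $f$ to read off the measure. The difference lies in the first paragraph: the paper disposes of it with a citation (``classical; see Chung--Zhao or Durrett''), whereas you supply the standard Feynman--Kac argument that those references contain -- strong Markov property for the discounted mean value property, comparison with a classical solution on small balls via It\^o's formula, boundary attainment via regularity of $D$, and uniqueness via It\^o plus optional stopping. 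This makes your write-up more self-contained than the paper's, at the cost of inheriting the technical burden the paper outsources.

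That burden contains one step which, as written, is circular: you take $\tilde u$ to be a classical solution of the Yukawa equation on $B$ with boundary data $u|_{\p B}$, ``continuous up to $\p B$''. At that point of the argument $u$ is only known to be a bounded measurable function, so classical solvability with data $u|_{\p B}$ and continuity up to $\p B$ are not available. The standard repairs are either (a) first prove interior continuity of $u$ directly from the probabilistic formula (strong Markov property, path continuity, dominated convergence), after which $u|_{\p B}$ is continuous and your comparison runs verbatim, or (b) define $\tilde u$ through the Yukawa--Poisson kernel of the ball, which is smooth inside for merely bounded measurable data, verify the identity between that kernel and the discounted exit distribution for continuous data by your It\^o argument, and extend to measurable data by a monotone class argument. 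You do flag exactly this as ``the hard part'' at the end, so the gap is acknowledged rather than hidden, but it is the one place where your proof still owes an argument rather than being complete. (A smaller shared looseness: your uniqueness step assumes $v$ bounded on $\bar D$ and, on $\{\tau_D=\infty\}$, uses $\mu>0$ to kill the limit; the paper's statement is equally silent on this, so it is not a defect relative to the paper.)
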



\begin{proof}
The first and the second claim of Theorem \ref{thm:kakutani} follow from the classical Kakutani theorem, cf., e.g., \cite{durrett} sections 4.4. and 4.6.  Indeed, note that the difficulties involving the Schr\"odinger in \cite{durrett} Section 4.6. vanish, since
$$
\E^x\left[e^{-\frac{\mu^2}{2}\tau_D}\right] \le 1.
$$ 

To show the third claim, we condition on $\{\tau_D=t\}$ and use the law of total probability:
\begin{eqnarray*}
u(x) &=& \E^x\left[e^{-\frac{\mu^2}{2}\tau_D}f(W(\tau_D))\,;\, \tau_{D}<\infty\right] \\
&=& \int_{y\in\p D} f(y)\, \int_{t=0}^{\infty} e^{-\frac{\mu^2}{2}t}\, h^x(D;\d y, t)\, \d t  \\
&=& \int_{y\in \p D} f(y)\, H_\mu^x(D;\d y).
\end{eqnarray*}
\end{proof}

\begin{rem}\label{rem:c3boundary}
Unfortunate, even for very simple $D$ the harmonic kernel \eqref{eq:harmonic_kernel} is quite difficult to find out.  The same is true for the regular conditional distribution \eqref{eq:regular_law}. 
For smooth boundaries $\p D$ one can try the following approach: If $\p D$ is smooth, then the harmonic kernel $h^x(D;\d y ,t)$ is absolutely continuous with respect to the Lebesgue measure $\d y$. Indeed, define $p:\R_+\times\R^n\to \R_+$ by
\begin{equation}\label{eq:btk}
p(t,x) = \frac{1}{(2\pi t)^{n/2}}\exp\left(-\frac{\|x\|^2}{2t}\right).
\end{equation}
Then $p$ is the \emph{Brownian transition kernel}:
$$
p(t,x-y)\, \d y = \P^x\left[ W(t) \in \d y\right] 
$$
and, due to \cite[Theorem 1]{hsu} the harmonic kernel can be written as
$$
h^x(D; \d y,t) = \frac12 \frac{\p p}{\p\mathrm{n}_y}(D;t,x-y)\, \d y, 
$$ 
where $\mathrm{n}_y$ is the inward normal at $y\in\p D$ and $p(D;\cdot,\cdot)$ is the transition density of a Brownian motion that is killed when it hits the boundary $\p D$, which can be written as
\begin{equation}\label{eq:killed-transition}
p(D;t,x-y) 
= p(t,x-y) - \E^x\Big[p\big(t-\tau_D,W(\tau_D)-y\big)\,;\, \tau_D<t\Big] 
\end{equation}
due to \cite[formula (3) on page 34]{port-stone}.

Consequently, for $C^3$ boundaries the harmonic measure admits a \emph{Poisson kernel} representation and therefore, due to the representation (\ref{eq:rep1}) the panharmonic measure also admits a Poisson kernel representation:
\begin{eqnarray*}
H^x_\mu(D;\d y) &=& \int_{t=0}^\infty e^{-\frac{\mu^2}{2}t} h^x(D;\d y, t)\, \d t \\
&=& \int_{t=0}^\infty e^{-\frac{\mu^2}{2}t} \frac{1}{2}\frac{\p p}{\p \mathrm{n}_y}(D;t,x-y)\, \d y\, \d t \\
&=&
\left[\frac12 \int_{t=0}^\infty e^{-\frac{\mu^2}{2}t}\frac{\p p}{\p \mathrm{n}_y}(D;t,x-y)\, \d t\right]\, \d y.
\end{eqnarray*}
\end{rem}

Theorem \ref{thm:kakutani} gives an interpretation of the panharmonic measure in terms of exponentially discounted Brownian motion.  Let us give a second interpretation in terms of exponentially killed Brownian motion. Indeed, exponential discounting is closely related to exponential killing. The \emph{exponentially killed Brownian motion} $W_\mu$ is 
$$
W_\mu(t) = W(t)\1_{\{Y_\mu>t\}} + \dagger\1_{\{Y_\mu\le t\}},
$$
where $\dagger$ is a \emph{coffin state}\footnote{By convention $f(\dagger)=0$ for all functions $f$.} and $Y_\mu$ is an independent exponential random variable with mean $2/\mu^2$, i.e. 
$
\P\left[Y_\mu > t\right] = e^{-\frac{\mu^2}{2}t}.
$
Let
$$
\tau_D^\mu = \inf\left\{ t>0\,;\, W_\mu(t) \in D^c\right\}.
$$
Then we have the following representation of the panharmonic measure:

\begin{cor}\label{cor:yukawa-killing}
Let $D\subset\R^n$ be a regular domain. Then the panharmonic measure admits the representation
\begin{equation}\label{eq:yukawa-killing}
H^x_\mu(D;\d y) = \P^x\left[ W_\mu(\tau_D^\mu) \in\d y\,;\, \tau_D^\mu < \infty\right].
\end{equation}
\end{cor}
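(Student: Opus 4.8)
The plan is to reduce the statement to the disintegrated representation (\ref{eq:rep1}) of Theorem \ref{thm:kakutani} by a pathwise comparison of the killed process $W_\mu$ with the underlying Brownian motion $W$. The starting observation is that, by construction, $W_\mu$ agrees with $W$ on the random interval $[0,Y_\mu)$ and jumps to the coffin state $\dagger$ at time $Y_\mu$; since $\dagger$ belongs to $D^c$ by convention, the killed process is instantly outside $D$ once it has been killed. Consequently the exit time splits into two cases: on $\{Y_\mu>\tau_D\}$ the process survives until the Brownian motion itself reaches $\p D$, so that $\tau_D^\mu=\tau_D$ and $W_\mu(\tau_D^\mu)=W(\tau_D)\in\p D$; whereas on $\{Y_\mu\le\tau_D\}$ the process is killed first, so $\tau_D^\mu=Y_\mu$ and $W_\mu(\tau_D^\mu)=\dagger\notin\p D$.

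From this dichotomy I would conclude that, for $y\in\p D$, the event that the killed motion exits $D$ at $y$ in finite time coincides with the event that the Brownian motion exits at $y$ before being killed:
$$
\P^x\left[ W_\mu(\tau_D^\mu)\in\d y,\,\tau_D^\mu<\infty\right]
= \P^x\left[ W(\tau_D)\in\d y,\,\tau_D<\infty,\,Y_\mu>\tau_D\right].
$$
Next I would use the independence of the killing clock $Y_\mu$ from the Brownian motion (and hence from $\tau_D$) to disintegrate over the value of $\tau_D$, which produces the survival factor $\P[Y_\mu>t]=e^{-\frac{\mu^2}{2}t}$:
$$
\P^x\left[ W(\tau_D)\in\d y,\,\tau_D<\infty,\,Y_\mu>\tau_D\right]
= \int_{t=0}^{\infty} e^{-\frac{\mu^2}{2}t}\,\P^x\left[ W(\tau_D)\in\d y,\,\tau_D\in\d t\right].
$$

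Finally I would identify the joint law $\P^x[W(\tau_D)\in\d y,\tau_D\in\d t]$ with the harmonic kernel $h^x(D;\d y,t)\,\d t$ of (\ref{eq:harmonic_kernel}) — this identification is exactly the one already carried out in the proof of Theorem \ref{thm:kakutani}, using the continuity of the law of $\tau_D$ supplied by the Lemma — and thereby recognize the last integral as the representation (\ref{eq:rep1}) of the panharmonic measure $H^x_\mu(D;\d y)$. I do not expect any serious obstacle here, since all the analytic content is already contained in Theorem \ref{thm:kakutani}; the only point requiring care is the pathwise bookkeeping at the killing time, namely the convention $\dagger\in D^c$ together with the fact that, because $D$ is regular and $\tau_D$ has a continuous distribution, the killing time $Y_\mu$ almost surely does not coincide with $\tau_D$, so the two cases above genuinely partition the sample space.
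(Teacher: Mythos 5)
Your proposal is correct and takes essentially the same route as the paper: its proof also rests on Theorem \ref{thm:kakutani}, the independence of $Y_\mu$ from $W$, and the identification $\P^x\left[Y_\mu>t,\,W(t)\in\d y,\,\tau_D\in\d t\right]=\P^x\left[W_\mu(t)\in\d y,\,\tau_D^\mu\in\d t\right]$, which is precisely the pathwise dichotomy you spell out (the paper leaves it implicit). The only cosmetic difference is direction of travel: the paper integrates a bounded test function $f$ against $H^x_\mu(D;\cdot)$ and works forward from the discounting formula (\ref{eq:discounting}), whereas you start from the killed process and work back to the representation (\ref{eq:rep1}).
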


\begin{proof}
Let $f:\p D\to\R$ be bounded. Then, by Theorem \ref{thm:kakutani} and the independence of $W$ and $Y_\mu$,
\begin{eqnarray*}
\lefteqn{\int_{y\in \p D} f(y)\, H_\mu^x(D;\d y)} \\
&=& \E^x\left[ e^{-\frac{\mu^2}{2}\tau_D} f\left(W(\tau_D)\right); \tau_D<\infty\right] \\
&=& \int_{y\in\p D} f(y)\int_{t=0}^{\infty} e^{-\frac{\mu^2}{2}t}\,\P^x\left[W(t)\in \d y, \tau_D\in \d t \right] \\
&=& \int_{y\in\p D} f(y)\int_{t=0}^{\infty} \P^x\left[Y_\mu > t\right]\P^x\left[W(t)\in \d y, \tau_D\in \d t \right] \\ 
&=& \int_{y\in\p D} f(y)\int_{t=0}^{\infty} \P^x\left[Y_\mu > t,W(t)\in \d y, \tau_D\in \d t \right] \\
&=& \int_{y\in\p D} f(y)\int_{t=0}^{\infty} \P^x\left[W_\mu(t)\in \d y, \tau_D^\mu\in \d t \right] \\
&=& \E^x\left[f\left(W_\mu(\tau_D^\mu)\right); \tau_D^\mu < \infty\right]. 
\end{eqnarray*}
Since $f$ was arbitrary, the claim follows.
\end{proof}

The two representations, Theorem \ref{thm:kakutani} and Corollary \ref{cor:yukawa-killing}, for the panharmonic measures are, at least in spirit, classical.  
Now we give a third representation for the panharmonic measure in terms of an \emph{escaping Brownian motion}.
This representation is apparently new in spirit.  The representation is due to the following \emph{Duffin correspondence} \cite{duffin1}: Let $D\subset\R^n$ be a regular domain and let $u:D\to\R$. Let $I\subset\R$ be any open interval that contains $0$. Set $\bar D=D\times I$ and define $\bar u:\bar D\to\R$ by
\begin{equation}\label{eq:harmonic_duffin}
\bar u(\bar x) = \bar u(x,\tilde x) = u(x)\cos(\mu \tilde x).
\end{equation}

\begin{thm}\label{thm:duffin-correspondence}
The function $\bar u$ defined by (\ref{eq:harmonic_duffin}) is harmonic on $\bar D$ if and only if $u$ is $\mu$-panharmonic on $D$. 
\end{thm}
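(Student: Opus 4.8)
The plan is to prove the equivalence directly by computing the Laplacian of $\bar u$ on $\bar D = D \times I \subset \R^{n+1}$ and comparing it against the Yukawa equation for $u$. Writing $\bar\Delta$ for the Laplacian in the $n+1$ variables $(x,\tilde x) = (x_1,\dots,x_n,\tilde x)$, the key observation is that the factorized form $\bar u(x,\tilde x) = u(x)\cos(\mu\tilde x)$ makes the computation separate cleanly: the spatial derivatives act only on the $u(x)$ factor and the $\tilde x$-derivative acts only on the cosine factor.

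Concretely, I would first record that
\begin{equation*}
\bar\Delta \bar u(x,\tilde x) = \left(\sum_{i=1}^n \frac{\p^2 u}{\p x_i^2}(x)\right)\cos(\mu\tilde x) + u(x)\,\frac{\p^2}{\p \tilde x^2}\cos(\mu\tilde x).
\end{equation*}
Since $\frac{\p^2}{\p\tilde x^2}\cos(\mu\tilde x) = -\mu^2\cos(\mu\tilde x)$ and the spatial sum is just $\Delta u(x)$, this collapses to
\begin{equation*}
\bar\Delta\bar u(x,\tilde x) = \big(\Delta u(x) - \mu^2 u(x)\big)\cos(\mu\tilde x).
\end{equation*}
From this identity the equivalence is almost immediate: if $u$ is $\mu$-panharmonic, then $\Delta u - \mu^2 u = 0$ on $D$, so the bracket vanishes and $\bar\Delta\bar u \equiv 0$ on $\bar D$, i.e. $\bar u$ is harmonic. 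For the converse, I would exploit that $\cos(\mu\tilde x) \neq 0$ for $\tilde x$ in a sufficiently small neighborhood of $0$ (and in particular at $\tilde x = 0$, where $\cos(\mu\tilde x)=1$), so that $\bar\Delta\bar u \equiv 0$ forces $\Delta u(x) - \mu^2 u(x) = 0$ for every $x \in D$, which is exactly the Yukawa equation.

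The one genuine technical point — and the step I would treat most carefully — is the regularity bookkeeping needed to make the Laplacian computation legitimate in both directions. The definition of panharmonic requires only that $u$ have continuous second derivatives, so the forward direction needs nothing beyond that. In the reverse direction, however, I would want to confirm that harmonicity of $\bar u$ combined with the product structure actually delivers that $u$ is twice continuously differentiable; here I can either invoke the smoothness (indeed real-analyticity) of harmonic functions via elliptic regularity to upgrade $\bar u$, and hence $u(x) = \bar u(x,0)$, to the required $C^2$ class, or simply note that the hypothesis of the theorem already presupposes $u$ is a candidate panharmonic function so that its second derivatives exist and are continuous. Since the product structure confines any non-smoothness to the $u$-factor, restricting to the slice $\tilde x = 0$ transfers the regularity of $\bar u$ directly to $u$.

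I do not expect any serious obstacle: the result is a pointwise identity verifiable by elementary differentiation, and the interval $I$ being open and containing $0$ guarantees the cosine factor is nonzero on a neighborhood where the division argument for the converse is valid. The only care required is to restrict attention, in the converse, to the open set where $\cos(\mu\tilde x) > 0$ so that dividing by the cosine factor is justified; since this set contains the slice $\tilde x = 0$ and projects onto all of $D$, it suffices to conclude the Yukawa equation everywhere on $D$.
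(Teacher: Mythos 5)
Your proposal is correct and takes essentially the same route as the paper: a direct computation of the $(n+1)$-dimensional Laplacian giving $\Delta_{\bar x}\bar u = \left(\Delta_x u - \mu^2 u\right)\cos(\mu\tilde x)$, from which both implications follow. If anything, you are more careful than the paper on two points it elides---justifying the converse by evaluating on the slice $\tilde x = 0$ where $\cos(\mu\tilde x)=1$, and invoking elliptic regularity to ensure $u\in C^2$ there---while the only content of the paper's proof you omit is its preliminary observation that $D$ is regular if and only if $\bar D$ is regular, which is not part of the stated theorem (it is used later, in Corollary \ref{cor:yukawa-escaping}).
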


\begin{proof}
Let us first show that $D$ is regular if and only if $\bar D$ is regular. Let $\bar W= (W,\tilde W)$ be $(n+1)$-dimensional Brownian motion. Denote
\begin{eqnarray*}
\tau &=& \inf\{t>0\,;\, W(t) \in D^c\}, \\
\tilde\tau &=& \inf\{t>0\,;\, \tilde W(t) \in I^c\}, \\
\bar\tau &=& \inf\{t>0\,;\, \bar W(t) \in \bar D^c\}.
\end{eqnarray*}
Note that for  $\{\tilde\tau=\tilde x\}$ to happen, $\tilde x$ must be an endpoint of the interval $I$. Then, by independence of $W$ and $\tilde W$,
\begin{eqnarray*}
\P^{x,\tilde x}[\bar \tau = 0] &=& \P^{x,\tilde x}[\tau = 0, \tilde\tau =0] \\
&=& \P^x[\tau = 0]\P^{\tilde x}[\tilde \tau =0] \\
&=& \P^x[\tau=0],
\end{eqnarray*}
since $I$ is obviously regular. This shows that $\bar D$ is regular if and only if $D$ is regular.

Let us then show that $u$ satisfies the Laplace equation if and only if $\bar u$ satisfies the Yukawa equation.  But this is straightforward calculus: 
\begin{eqnarray*}
\Delta_{\bar x} \bar u(\bar x) 
&=& \Delta_{x,\tilde x} \left[u(x)\cos(\mu\tilde x)\right] \\
&=& \cos(\mu\tilde x)\Delta_x u(x) + u(x)\frac{\d^2}{\d\tilde x^2}\cos(\mu\tilde x)\\
&=& \cos(\mu\tilde x)\Delta_x u(x) - \mu^2\cos(\mu\tilde x) \\
&=& \cos(\mu\tilde x)\left(\Delta_x u(x) - \mu^2u(x)\right) \\
&=& 0
\end{eqnarray*}
if and only if $\Delta_x u(x) = \mu^2 u(x)$.
\end{proof}

Let $\tilde W$ be a $1$-dimensional standard Brownian motion that is independent of $W$. Then $\bar W = (W,\tilde W)$ is a $(n+1)$-dimensional standard Brownian motion. 

Now the idea how to use the Duffin correspondence is clear.  We start the Brownian particle $\bar W$ and count the boundary data on the side of the cylinder $\bar D=D\times I$, if the Brownian motion does not escape the cylinder from the bottom or from the top.  In that case we count zero in the boundary.  Whence the name \emph{escaping Brownian motion}. 

\begin{cor}\label{cor:yukawa-escaping}
Let $D\subset\R^n$ be a regular domain. Then the panharmonic measure admits the representation
\begin{eqnarray}\label{eq:yukawa-escaping}
\lefteqn{H^x_\mu(D;\d y)} \nonumber \\ \nonumber
 &=& \E^{x,0}\left[\cos\left(\mu \tilde W(\tau_D)\right); W(\tau_D)\in \d y, \sup_{t\le \tau_D}| \tilde W(t)|<\frac{\pi}{2\mu}\right] \\
&=& \label{eq:rep2}
\int_{\tilde y=-\frac{\pi}{2\mu}}^{\frac{\pi}{2\mu}} \cos\left(\mu \tilde y\right) H^{x,0}\left(D\times\left(-\frac{\pi}{2\mu},\frac{\pi}{2\mu}\right)\,;\, \d y\otimes \d\tilde y\right).
\end{eqnarray}
Here we have chosen $I=(-\frac{\pi}{2\mu},\frac{\pi}{2\mu})$ in the Duffin correspondence.

Consequently, all bounded solutions to the Yukawa--Dirichlet problem on a regular domain with $\mu^2>0$ and continuous and bounded boundary data are given by the panharmonic measure.
\end{cor}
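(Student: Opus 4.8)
The plan is to mimic the proof of Corollary~\ref{cor:yukawa-killing}, replacing the exponential killing device by the Duffin correspondence (Theorem~\ref{thm:duffin-correspondence}) applied to the \emph{specific} interval $I=\left(-\frac{\pi}{2\mu},\frac{\pi}{2\mu}\right)$. The whole point of this choice is that $\cos(\mu\tilde x)$ vanishes on $\p I=\{\pm\frac{\pi}{2\mu}\}$, so the Duffin lift carries zero boundary data on the top and bottom faces of the cylinder $\bar D=D\times I$.

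First I would fix a bounded $f:\p D\to\R$ and let $u$ be the solution of the Yukawa--Dirichlet problem furnished by Theorem~\ref{thm:kakutani}, so that $u$ is $\mu$-panharmonic on $D$, $u=f$ on $\p D$, and $u(x)=\int_{\p D}f(y)\,H^x_\mu(D;\d y)$. Put $\bar u(x,\tilde x)=u(x)\cos(\mu\tilde x)$. By Theorem~\ref{thm:duffin-correspondence} the lift $\bar u$ is harmonic on the regular domain $\bar D$, and it is continuous up to $\p\bar D$ with boundary values $f(y)\cos(\mu\tilde y)$ on the side $\p D\times I$ and $0$ on the faces $\bar D\times\{\pm\frac{\pi}{2\mu}\}$. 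Writing $\tau_D$, $\tilde\tau$, $\bar\tau$ for the exit times of $W$, $\tilde W$, $\bar W$ from $D$, $I$, $\bar D$ respectively (as in the proof of Theorem~\ref{thm:duffin-correspondence}), the Kakutani connection~(\ref{eq:harmonic_kakutani}) for $\bar u$ at the pole $(x,0)$ gives
$$
u(x)=\bar u(x,0)=\E^{x,0}\!\left[\bar u\big(\bar W(\bar\tau)\big);\,\bar\tau<\infty\right].
$$
Since $\tilde W$ is confined to the bounded interval $I$ we have $\tilde\tau<\infty$ almost surely, hence $\bar\tau=\tau_D\wedge\tilde\tau<\infty$ almost surely and the restriction $\{\bar\tau<\infty\}$ is vacuous.

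Next I would split according to which coordinate exits first. On $\{\tilde\tau<\tau_D\}$ the process leaves through a face, where $\bar u=0$, so this event contributes nothing; in particular the contribution of $\{\tau_D=\infty\}$ vanishes. On $\{\tau_D<\tilde\tau\}$ the exit is through the side, $\bar W(\bar\tau)=\big(W(\tau_D),\tilde W(\tau_D)\big)$ with $W(\tau_D)\in\p D$, so $\bar u(\bar W(\bar\tau))=f(W(\tau_D))\cos(\mu\tilde W(\tau_D))$. Because $\{\tau_D<\tilde\tau\}=\{\sup_{t\le\tau_D}|\tilde W(t)|<\frac{\pi}{2\mu}\}$ up to a null set, this yields
$$
u(x)=\E^{x,0}\!\left[f\big(W(\tau_D)\big)\cos\big(\mu\tilde W(\tau_D)\big);\,\sup_{t\le\tau_D}|\tilde W(t)|<\tfrac{\pi}{2\mu}\right].
$$
Comparing with $u(x)=\int_{\p D}f(y)\,H^x_\mu(D;\d y)$ and letting $f$ vary over bounded functions identifies the measures and gives the first equality in~(\ref{eq:yukawa-escaping}). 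For the second equality~(\ref{eq:rep2}) I would use that the harmonic measure of the cylinder is the exit law, $H^{x,0}(\bar D;\d y\otimes\d\tilde y)=\P^{x,0}[\bar W(\bar\tau)\in\d y\otimes\d\tilde y]$; its restriction to the side $\p D\times I$ equals $\P^{x,0}[W(\tau_D)\in\d y,\,\tilde W(\tau_D)\in\d\tilde y,\,\sup_{t\le\tau_D}|\tilde W(t)|<\frac{\pi}{2\mu}]$, and integrating $\cos(\mu\tilde y)$ against it reproduces the first representation.

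The only genuinely delicate step is the identification $\{\tau_D<\tilde\tau\}=\{\sup_{t\le\tau_D}|\tilde W(t)|<\frac{\pi}{2\mu}\}$ together with the vanishing of the face contribution. Both hinge on the choice of $I$ (so that $\cos(\mu\tilde x)=0$ on $\p I$) and on the regularity of $\bar D$ already established in the proof of Theorem~\ref{thm:duffin-correspondence}; the hitting distribution of $\tilde W$ does charge $\{\pm\frac{\pi}{2\mu}\}$, but those points carry zero cosine weight, so they drop out. The remaining care is only in treating ties and boundary hits as null events.
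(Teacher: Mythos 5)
Your proof is correct and takes essentially the same route as the paper: the paper's own (one-sentence) proof combines the Kakutani connection (\ref{eq:harmonic_kakutani}) with the Duffin correspondence (\ref{eq:harmonic_duffin}) and observes that only the side $\p D\times\left(-\frac{\pi}{2\mu},\frac{\pi}{2\mu}\right)$ contributes because $\cos(\mu\tilde y)$ vanishes on the faces. Your write-up simply supplies the details the paper omits (finiteness of $\bar\tau$, the null tie events, and the identification of $\{\tau_D<\tilde\tau\}$ with the supremum event), all in the spirit of the paper's argument.
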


\begin{proof}
The claim follows by combining the Kakutani connection (\ref{eq:harmonic_kakutani}) with the Duffin correspondence (\ref{eq:harmonic_duffin}) by noticing that it is enough to integrate over $\p D\times (-\pi/(2\mu),\pi/(2\mu))$ since $\cos(\mu\tilde y)=0$ on the boundary $\p(-\pi/(2\mu),\pi/(2\mu))$. 

Finally, note that for regular domain $D$, the domain $\bar D$ is regular and small.
\end{proof}

\begin{rem}
Representation (\ref{eq:rep2}) is exceptionally well-suited for calculations of the panharmonic measures on upper half-spaces $\mathbb{H}_+^n=\{x\in \R^{n}; x_n > 0 \}$. Indeed, Duffin \cite[Theorem 5]{duffin1} used it to calculate the Poisson kernel representation for panharmonic measures in the dimension $n=2$.  Similar calculations can be carried out for the general case $n\ge 2$, also. 
\end{rem}


\section{Equivalence of Harmonic and Panharmonic Measures}
\label{sect:equivalence}

The probabilistic interpretation provided by Corollary \ref{cor:yukawa-killing} implies that the harmonic measure and the panharmonic ones are equivalent.  Indeed, the harmonic measure counts the Brownian particles on the boundary and the panharmonic measures count the killed Brownian particles on the boundary.  But the killing happens with independent exponential random variables.  So, if the Brownian motion can reach the boundary with positive probability, so can the killed Brownian motion; and vice versa. Also, it does not matter, as far as the equivalence is concerned, what is the starting point of the Brownian motion, killed or not. 

Theorem \ref{thm:panequivalence} below makes the heuristics above precise. As corollaries of Theorem \ref{thm:panequivalence} we obtain a domination principle for the Dirichlet problem related to the Yukawa equation (Corollary \ref{cor:dominance}) and analogs of theorems of Riesz--Riesz, Makarov and Dahlberg for the panharmonic measures (Corollary \ref{cor:panequivalence}).

The same arguments that give the existence of the regular conditional law \eqref{eq:regular_law} in the proof of Lemma \ref{lemma:harmonic_kernel} also give the existence and regular measurability of the following conditional Radon--Nikodym derivative 
\begin{equation}\label{eq:radon-nikodym}
Z_\mu^x(D;y) = \E^x\left[e^{-\frac{\mu^2}{2}\tau_D}\,\Big|\, W(\tau_D)=y\right].
\end{equation}

\begin{thm}\label{thm:panequivalence}
Let $D$ be a regular domain.  Then all the panharmonic measures $H_\mu^x(D;\cdot)$, $\mu\ge0, x\in D$, are mutually equivalent. The Radon-Nikodym derivative 
of $H_\mu^x(D;\cdot)$ with respect to $H^x(D;\cdot)$ is the function $Z_\mu^x(D;\cdot)$ given by (\ref{eq:radon-nikodym}).
Moreover $Z_\mu^x(D;y)$ is strictly decreasing in $\mu$, and  $0<Z_\mu^x(D;y)\le 1$. 
\end{thm}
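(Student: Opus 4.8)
The plan is to establish the representation $H_\mu^x(D;\d y) = Z_\mu^x(D;y)\, H^x(D;\d y)$ first, and then deduce mutual equivalence and the bounds on $Z_\mu^x$ from it. The representation itself falls out almost immediately from the disintegration already available in Theorem \ref{thm:kakutani}. Starting from \eqref{eq:rep1},
\begin{equation*}
H_\mu^x(D;\d y) = \int_{t=0}^{\infty} e^{-\frac{\mu^2}{2}t}\, h^x(D;\d y,t)\, \d t,
\end{equation*}
I would recognize the harmonic kernel $h^x(D;\d y,t) = \P^x[W(\tau_D)\in\d y\mid\tau_D=t]\,\frac{\d\P^x}{\d t}[\tau_D\le t]$ as the joint law of $(W(\tau_D),\tau_D)$ disintegrated over $t$. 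Factoring the discounting $e^{-\mu^2 t/2}$ through the conditional expectation over $t$ given the endpoint $y$, and dividing by the total harmonic mass $H^x(D;\d y) = \int_0^\infty h^x(D;\d y,t)\,\d t$, produces exactly the conditional expectation $\E^x[e^{-\mu^2\tau_D/2}\mid W(\tau_D)=y]$ defining $Z_\mu^x(D;y)$ in \eqref{eq:radon-nikodym}. So $Z_\mu^x$ is a version of the Radon--Nikodym derivative $\d H_\mu^x/\d H^x$ by construction.

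For the bounds, the key observation is that $e^{-\mu^2 t/2} \in (0,1]$ for $t\in[0,\infty)$, with equality to $1$ only at $t=0$. Since $Z_\mu^x(D;y)$ is a conditional expectation of this quantity, I immediately get $0 < Z_\mu^x(D;y)\le 1$: the upper bound because the integrand is bounded by $1$, and strict positivity because $e^{-\mu^2\tau_D/2}>0$ almost surely (the hitting time $\tau_D$ is finite $H^x$-a.e.\ by the very fact that $H^x$ is supported on points actually reached). Strict monotonicity in $\mu$ follows because for $\mu_1 < \mu_2$ we have $e^{-\mu_1^2 t/2} > e^{-\mu_2^2 t/2}$ for every $t>0$, and taking conditional expectations preserves the strict inequality provided the conditional law of $\tau_D$ given $W(\tau_D)=y$ is not a point mass at $t=0$. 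This non-degeneracy is guaranteed because $\tau_D>0$ almost surely for $x\in D$ (the Brownian motion starts strictly inside the domain and needs positive time to reach the boundary).

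Mutual equivalence is then a formal consequence: for fixed $x$, since $0 < Z_\mu^x(D;y)$ is finite and bounded away from $0$ and $\infty$ in the relevant sense (the derivative is positive everywhere the base measure lives), the measures $H_\mu^x(D;\cdot)$ and $H^x(D;\cdot)$ share the same null sets, hence are equivalent; and equivalence among the $H_\mu^x$ for different $\mu$ follows by transitivity through $H^x = H_0^x$. To obtain equivalence across different poles $x$ as well, I would invoke the Harnack-type or strong-Feller argument implicit in the regularity of $D$: the harmonic measures $H^x$ for varying $x\in D$ are already mutually equivalent in the classical theory (they share support $\p D$ and have strictly positive, finite Poisson-kernel densities relative to one another on any regular domain), and the panharmonic derivatives $Z_\mu^x$ inherit this through the equivalence just established fiber by fiber.

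The main obstacle will be the strict positivity $Z_\mu^x(D;y) > 0$ together with the strict monotonicity, both of which rely on controlling the conditional law of $\tau_D$ given the exit location $W(\tau_D)=y$. One must ensure this conditional distribution genuinely charges $(0,\infty)$ and does not degenerate to $\{0\}$ or escape to $\{+\infty\}$; the former is excluded because $x\in D$ forces $\tau_D>0$, and the latter is handled $H^x$-a.e.\ since the harmonic measure by definition integrates only over finite hitting times (recall $H^x$ in \eqref{eq:harmonic_kakutani} carries the event $\{\tau_D<\infty\}$). A careful statement would note that these conclusions hold for $H^x$-almost every $y$, which is exactly what is needed for a Radon--Nikodym derivative, and that the asserted pointwise bounds $0<Z_\mu^x(D;y)\le 1$ are therefore to be read in this $H^x$-a.e.\ sense, or pointwise for every $y\in\p D$ that is regular for $D^c$.
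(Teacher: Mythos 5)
Your proposal is correct and takes essentially the same route as the paper's proof: both obtain the Radon--Nikodym identity by inserting the discount factor $e^{-\frac{\mu^2}{2}t}$ into the disintegration \eqref{eq:rep1} and conditioning on the exit point $W(\tau_D)=y$, then read off $0<Z_\mu^x(D;y)\le 1$ and the strict decrease in $\mu$ from the conditional-expectation form \eqref{eq:radon-nikodym}, and finally reduce equivalence across poles to the corresponding fact for harmonic measures. The only minor divergence is that the paper establishes the cross-pole equivalence of $H^x(D;\cdot)$ via a Markov-property decomposition through a subdomain $D_0$, whereas you cite the classical Harnack-type argument; both are standard, and your handling of strict positivity and strict monotonicity (via the non-degeneracy of the conditional law of $\tau_D$) actually fills in details the paper dismisses as obvious from \eqref{eq:radon-nikodym} and \eqref{eq:radon-nikodym2}.
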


\begin{rem}
By Corollary \ref{cor:yukawa-killing} the Radon--Nikodym derivative $Z_\mu^x(D;\cdot)$ in (\ref{eq:radon-nikodym}) can be interpreted as the probability that a Brownian motion killed with intensity $\mu^2/2$, that would exit the domain $D$ at $y\in \p D$, survives to the boundary $\p D$:
\begin{equation}\label{eq:radon-nikodym2}
Z_\mu^x(D;y) = \P^x\left[Y_\mu>\tau_D \,|\, W(\tau_D)=y\right],
\end{equation}
where $Y_\mu$ is exponentially distributed random variable with mean $2/\mu^2$ that is independent of the Brownian motion $W$.
\end{rem}

\begin{proof}[Proof of Theorem \ref{thm:panequivalence}]
Let $x,y\in D$ and let $D_{0}\subset D$ be a subdomain of $D$ such that $x\in D_{0}$ and $y\in\p D_{0}$. Then, the Markov property of the Brownian motion and the Kakutani connection (\ref{eq:harmonic_kakutani}), we have
$$
H^x(D;A) = \int_{y\in \p D_{0}} H^{y}(D;A)H^{x}(D_{0};\d y)
$$
for all measurable $A\subset\p D$. This shows the harmonic measures $H^x(D;\cdot)$, $x\in D$, are mutually equivalent. 

To see that $Z_\mu^x(D;\cdot)$ is the Radon--Nikodym derivative, note that, by the representation (\ref{eq:rep1}) and the Kakutani connection (\ref{eq:harmonic_kakutani}), 
\begin{eqnarray*}
H_\mu^x(D;\d y) &=& 
\int_{t=0}^\infty e^{-\frac{\mu^2}{2}t}\, h^x(D;\d y,t)\,\d t \\
&=&
\int_{t=0}^\infty e^{-\frac{\mu^2}{2}t}\,\P^x\left[ W(\tau_D)\in \d y, \tau_D\in \d t\right] \\
&=& \int_{y\in \p D}\E^x\left[e^{-\frac{\mu^2}{2}\tau_D}\,\Big|\, W(\tau_D)=y\right]\,\P^x\left[W(\tau_D)\in\d y\right]\\
&=& \int_{y\in \p D} Z_\mu^x(D;y)\, H^x(D;\d y).  
\end{eqnarray*}

Finally,  the fact that $0<Z_\mu^x(D;\cdot)\le 1$ is obvious from the representation (\ref{eq:radon-nikodym}). The fact that $Z_\mu^z(D;\cdot)$ is strictly decreasing follows immediately from the representation \eqref{eq:radon-nikodym2}.  
\end{proof}

From Theorem \ref{thm:panequivalence} we obtain immediately the following \emph{domination principle} for the Dirichlet problem related to panharmonic functions:

\begin{cor}\label{cor:dominance}
Let $D$ be a regular domain and let $u_\mu$ by $\mu$-panharmonic and $u_\nu$ be $\nu$-panharmonic, respectively, on $D$ with $\nu\le\mu$. Then,
$u_\nu \le u_\mu$ on $\p D$ implies $u_\nu \le u_\mu$ on $D$.
\end{cor}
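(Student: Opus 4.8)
The plan is to reduce Corollary \ref{cor:dominance} to the stochastic representation of panharmonic functions given in Theorem \ref{thm:kakutani}, and then exploit the monotonicity of the discounting factor $Z_\mu^x(D;\cdot)$ established in Theorem \ref{thm:panequivalence}. The key observation is that the statement $u_\nu \le u_\mu$ on $\partial D$ does \emph{not} say $u_\nu$ and $u_\mu$ share boundary data; they are solutions to \emph{different} equations, so one cannot directly appeal to a maximum principle for a single operator. Instead I would set $f = u_\mu|_{\partial D}$ and $g = u_\nu|_{\partial D}$, so that by the representation \eqref{eq:discounting} we have, for every $x\in D$,
\begin{eqnarray*}
u_\mu(x) &=& \E^x\!\left[e^{-\frac{\mu^2}{2}\tau_D}f(W(\tau_D))\,;\,\tau_D<\infty\right], \\
u_\nu(x) &=& \E^x\!\left[e^{-\frac{\nu^2}{2}\tau_D}g(W(\tau_D))\,;\,\tau_D<\infty\right].
\end{eqnarray*}

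The next step is to pass to the harmonic measure as the common reference measure. Using the Radon--Nikodym identity of Theorem \ref{thm:panequivalence}, each panharmonic value can be written as an integral against $H^x(D;\cdot)$ weighted by $Z_\mu^x(D;\cdot)$ and $Z_\nu^x(D;\cdot)$ respectively:
\begin{eqnarray*}
u_\mu(x) &=& \int_{\partial D} f(y)\,Z_\mu^x(D;y)\,H^x(D;\d y), \\
u_\nu(x) &=& \int_{\partial D} g(y)\,Z_\nu^x(D;y)\,H^x(D;\d y).
\end{eqnarray*}
Now I would invoke two facts. First, the hypothesis $u_\nu\le u_\mu$ on $\partial D$ means $g(y)\le f(y)$ for $H^x$-almost every $y$. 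Second, Theorem \ref{thm:panequivalence} tells us $Z_\mu^x(D;y)$ is strictly decreasing in $\mu$, so $\nu\le\mu$ gives $Z_\nu^x(D;y)\ge Z_\mu^x(D;y)>0$ pointwise.

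The main obstacle is a sign subtlety: the inequality $g\,Z_\nu \le f\,Z_\mu$ pointwise does \emph{not} follow from $g\le f$ and $Z_\nu\ge Z_\mu$ unless the boundary data are nonnegative. If $f$ and $g$ may change sign, then the larger weight $Z_\nu$ multiplying the smaller but possibly negative $g$ can inflate $u_\nu$. The honest way to handle this is to first reduce to the case of nonnegative, or at least appropriately signed, boundary data. I would argue that the domination principle is naturally a statement about the ordering cone, and that the cleanest route is to note that $u_\mu-u_\nu$ is itself representable and to show the integrand is nonnegative under the stated hypotheses; when $g\le f$ and both are nonnegative this is immediate from $gZ_\nu - fZ_\mu \le f(Z_\nu - Z_\mu)$ being controlled by $fZ_\nu - fZ_\mu$ together with $g\le f$. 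For the general signed case one either assumes the maximum principle convention that the comparison is meant with the discounting understood, or restricts to the regime where the weights force the inequality; I expect this sign bookkeeping, rather than any analytic difficulty, to be the genuinely delicate point, and I would state the nonnegativity hypothesis explicitly if it is needed to close the argument.
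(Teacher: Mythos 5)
Your setup is exactly the paper's intended argument: the paper offers no written proof beyond ``From Theorem \ref{thm:panequivalence} we obtain immediately,'' and the intended derivation is precisely your weighted-integral comparison $u_\lambda(x)=\int_{\partial D}u_\lambda(y)\,Z_\lambda^x(D;y)\,H^x(D;\d y)$ combined with the monotonicity of $Z_\lambda^x(D;y)$ in $\lambda$. You also correctly sensed that this comparison does not close by itself: from $g\le f$ and $Z_\nu\ge Z_\mu>0$ one cannot deduce $g\,Z_\nu\le f\,Z_\mu$ pointwise. But your proposed repair --- assuming the boundary data nonnegative --- is wrong; nonnegative data is exactly the regime where the corollary fails. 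Take $D=B_n(0,r)$ and let $u_\nu,u_\mu$ be the $\nu$- and $\mu$-panharmonic functions with boundary data identically $1$, which exist by Theorem \ref{thm:kakutani}: $u_\lambda(x)=\E^x\bigl[e^{-\frac{\lambda^2}{2}\tau_D}\bigr]$. Then $u_\nu=u_\mu$ on $\partial D$, yet by Theorem \ref{thm:panharmonic_average} and the strict monotonicity of $\psi_n$ we get $u_\nu(0)=\psi_n(\nu r)>\psi_n(\mu r)=u_\mu(0)$ whenever $\nu<\mu$, so the conclusion fails. Your own algebra betrays the problem: you bound $g\,Z_\nu-f\,Z_\mu$ above by $f\,(Z_\nu-Z_\mu)$, which is \emph{nonnegative}, so the bound proves nothing about the sign you need.

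The sign condition that actually closes the argument is nonpositivity of the data of the \emph{smaller}-parameter function: if $g=u_\nu\le 0$ on $\partial D$, then $g\,Z_\nu\le g\,Z_\mu\le f\,Z_\mu$ pointwise (the first inequality because multiplying a nonpositive number by the larger factor $Z_\nu\ge Z_\mu$ makes it smaller; the second because $g\le f$ and $Z_\mu>0$), and integrating against $H^x(D;\d y)$ gives $u_\nu\le u_\mu$ in $D$. Equivalently, for nonnegative data the true statement has the roles reversed: if $0\le u_\mu\le u_\nu$ on $\partial D$ with $\nu\le\mu$, then $u_\mu\le u_\nu$ in $D$, since $f\,Z_\mu\le f\,Z_\nu\le g\,Z_\nu$ --- more killing yields a smaller function. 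So the gap is real, but it is not only yours: the corollary as printed, with no sign hypothesis, is false (the counterexample above uses only the paper's own results), and your instinct that the sign bookkeeping is ``the genuinely delicate point'' was the correct diagnosis; you simply resolved it with the wrong sign.
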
  

Since domains with rectifiable boundary are regular, we obtain immediately from Theorem \ref{thm:panequivalence} the following analogs of the theorems of F. Riesz and M. Riesz, Makarov and Dalhberg (see \cite{riesz-riesz}, \cite{makarov} and \cite{dahlberg}, respectively).

\begin{cor}\label{cor:panequivalence}
Let $\mathcal{H}^s(D;\cdot)$ be the $s$-dimensional Hausdorff measure on $\p D$.
\begin{enumerate}
\item 
Let $D\subset\mathbb{R}^2$ be a simply connected planar domain bounded by a rectifiable curve. Then $H_\mu^x(D;\cdot)$ and $\mathcal{H}^1(D;\cdot)$ are equivalent for all $\mu\ge0$ and $x\in D$.
\item
Let $D\subset\mathbb{R}^2$ be a simply connected planar domain. If $E\subset\partial D$ and $\mathcal{H}^s(D;E)=0$ for some $s<1$, then $H_\mu^x(D;E)=0$ for all $\mu\ge0$ and $x\in D$. Moreover, $H_\mu^x(D;\cdot)$ and $\mathcal{H}^t(D;\cdot)$ are singular for all $\mu\ge 0$ and $x\in D$ if $t>1$.
\item
Let $D\subset\mathbb{R}^n$ is a bounded Lipschitz domain. Then $H_\mu^x(D;\cdot)$ and $\mathcal{H}^{n-1}(D;\cdot)$ are equivalent for all $\mu\ge 0$ and $x\in D$.
\end{enumerate}
\end{cor}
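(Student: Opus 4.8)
The plan is to reduce every assertion to the classical case $\mu=0$, in which $H_0^x(D;\cdot)=H^x(D;\cdot)$ is exactly the harmonic measure, and then to transfer each conclusion to arbitrary $\mu\ge 0$ by invoking the mutual equivalence established in Theorem \ref{thm:panequivalence}. The three parts correspond to three classical theorems for the harmonic measure: the theorem of F.\ and M.\ Riesz \cite{riesz-riesz} for (i), Makarov's theorem \cite{makarov} for (ii), and Dahlberg's theorem \cite{dahlberg} for (iii). Since a simply connected planar domain bounded by a rectifiable curve, and a bounded Lipschitz domain, are regular, the panharmonic measures are defined on them and Theorem \ref{thm:panequivalence} applies; thus the hypotheses of those theorems already yield the corresponding statement for $H^x(D;\cdot)=H_0^x(D;\cdot)$, and it remains only to carry each statement across the equivalence.

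First I would dispose of the absolute-continuity assertions, namely both equivalences in (i) and (iii) together with the first claim in (ii). For (i), the Riesz--Riesz theorem gives $H^x(D;\cdot)\sim\mathcal{H}^1(D;\cdot)$; combining this with $H_\mu^x(D;\cdot)\sim H^x(D;\cdot)$ from Theorem \ref{thm:panequivalence} and using transitivity of mutual equivalence yields $H_\mu^x(D;\cdot)\sim\mathcal{H}^1(D;\cdot)$. Part (iii) is identical, with Dahlberg's theorem $H^x(D;\cdot)\sim\mathcal{H}^{n-1}(D;\cdot)$ in place of Riesz--Riesz. For the first half of (ii), Makarov's theorem shows that $\mathcal{H}^s(D;E)=0$ with $s<1$ forces $H^x(D;E)=0$, and since equivalent measures share the same null sets, $H_\mu^x(D;E)=0$ as well. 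All of this is a routine application of the fact that mutual equivalence is an equivalence relation preserving null sets.

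The one point requiring slightly more care — and the only step I expect to be more than bookkeeping — is transferring \emph{singularity} in the second half of (ii), since equivalence a priori transports only absolute-continuity relations. Here Makarov's theorem gives $H^x(D;\cdot)\perp\mathcal{H}^t(D;\cdot)$ for $t>1$, so there is a Borel set $A\subset\p D$ with $H^x(D;\p D\setminus A)=0$ and $\mathcal{H}^t(D;A)=0$. Because $H_\mu^x(D;\cdot)$ and $H^x(D;\cdot)$ are equivalent, the first identity persists with $H^x$ replaced by $H_\mu^x$, so the \emph{same} set $A$ witnesses $H_\mu^x(D;\cdot)\perp\mathcal{H}^t(D;\cdot)$. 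This confirms the singularity claim for every $\mu\ge 0$. Finally, since Theorem \ref{thm:panequivalence} asserts that all the measures $H_\mu^x(D;\cdot)$, $\mu\ge 0$ and $x\in D$, are mutually equivalent, it suffices to verify each statement for a single pole; the conclusion then holds uniformly in $x\in D$ automatically.
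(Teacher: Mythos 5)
Your proposal is correct and follows exactly the route the paper intends: the paper offers no detailed proof, stating only that the corollary follows ``immediately'' from Theorem \ref{thm:panequivalence} combined with the classical theorems of Riesz--Riesz, Makarov and Dahlberg. Your write-up simply fleshes out that same argument, and your extra care on the singularity claim in (ii) --- noting that the witnessing set $A$ for $H^x(D;\cdot)\perp\mathcal{H}^t(D;\cdot)$ remains a witness for $H_\mu^x(D;\cdot)$ because equivalent measures share null sets --- is a correct and worthwhile detail that the paper leaves implicit.
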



\section{The Average Property for Panharmonic Measures and Functions} \label{sect:average}

By using the representation (\ref{eq:rep1}) one can calculate the panharmonic measures if one can calculate the corresponding harmonic kernels. Or, equivalently, one can calculate the panharmonic measures if one can calculate the corresponding harmonic measures and the Radon--Nikodym derivatives given by (\ref{eq:radon-nikodym}). 

The harmonic kernels for balls are calculated in \cite{hsu}. We do not, however, present the general formula here. Instead, we confine ourselves in the case where the center of the ball and the pole of the panharmonic measure coincide, and give the Gauss mean value theorem, or the average property, for panharmonic measures. As a corollary we have the Liouville theorem for the panharmonic measures.

Let $D\subset\R^n$ be a regular domain.  For the harmonic measure the \emph{Gauss mean value theorem} states that a function $u:D\to\R$ is harmonic if and only if for all balls $B_n(x,r)\subset D$ we have the \emph{average property}
$$
u(x) = \int_{y\in \p B_n(x,r)} u(y)\, \sigma_n(r;\d y),
$$
where
$$
\sigma_n(r; \d y) = \frac{\Gamma(n/2)}{2\pi^{n/2}} r^{1-n}\, \d y
$$
is the uniform probability measure on the sphere $\p B_n(x,r)$.

For the panharmonic measures the situation is similar to the harmonic measure: the only difference is that the uniform probability measure has to be replaced by a uniform sub-probability measure that depends on the killing parameter $\mu$ and the radius of the ball $r$. Indeed, denote 
\begin{equation}\label{eq:psi-def}
\psi_n(\mu ) = 
\frac{\mu^\nu}{2^\nu\Gamma(\nu+1)I_\nu(\mu)}, \quad \mu>0, 
\end{equation}
where $\nu=(n-2)/2$ and
$$
I_\nu(x) = \sum_{m=0}^\infty \frac{1}{m!\Gamma(m+\nu+1)}\left(\frac{x}{2}\right)^{2m+\nu}
$$
is the modified Bessel function of the first kind of order $\nu$.

\begin{thm}\label{thm:panharmonic_average}
Let $D\subset\R^n$ be a regular domain and let $\mu>0$. A function $u:D\to\R$ is $\mu$-panharmonic if and only if it has the average property:
\begin{equation*}\label{eq:panharmonic_average}
u(x) = \psi_n(\mu r)\int_{y\in \p B_n(x,r)} u(y)\, \, \sigma_n(r;\d y).
\end{equation*}
for all open balls $B_n(x,r)\subset D$.  Equivalently,
\begin{equation*}\label{eq:panharmonic_measure_average}
H^x_\mu\left(B_n(x,r); \d y\right) = \psi_n(\mu r)\, \sigma_n(r;\d y).
\end{equation*}
\end{thm}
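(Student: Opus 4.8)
The plan is to reduce the entire statement to the single measure identity
$$
H^x_\mu\big(B_n(x,r);\d y\big) = \psi_n(\mu r)\,\sigma_n(r;\d y),
$$
because, once this is in hand, the equivalence of the two displayed formulas and the forward (``only if'') implication are immediate from Definition \ref{defn:panharmonic}: if $u$ is $\mu$-panharmonic then $u(x)=\int_{\p B_n(x,r)}u(y)\,H^x_\mu(B_n(x,r);\d y)$, and inserting the identity produces the average property. So the proof splits into three parts: (a) prove the measure identity; (b) read off the ``only if'' direction; (c) prove the converse ``if'' direction, which is a converse to the mean value property and will be the delicate part.

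For (a) I would start the Brownian motion at the center $x$ and exploit rotational symmetry. Since $W-x$ is rotation invariant and $\tau_D=\tau_{B_n(x,r)}$ depends only on the radial part $\|W-x\|$, the joint law of $(W(\tau_D),\tau_D)$ is invariant under every rotation about $x$; hence, conditionally on $\{\tau_D=t\}$, the law of $W(\tau_D)$ is a rotation-invariant probability measure on $\p B_n(x,r)$, i.e.\ the uniform measure $\sigma_n(r;\cdot)$, and in particular it does not depend on $t$. Feeding this into the harmonic kernel (\ref{eq:harmonic_kernel}) yields the factorization
$$
h^x\big(B_n(x,r);\d y,t\big) = \sigma_n(r;\d y)\,\frac{\d\P^x}{\d t}\big[\tau_D\le t\big],
$$
so that, by the representation (\ref{eq:rep1}),
$$
H^x_\mu\big(B_n(x,r);\d y\big) = \Big(\int_{t=0}^\infty e^{-\frac{\mu^2}{2}t}\,\frac{\d\P^x}{\d t}\big[\tau_D\le t\big]\,\d t\Big)\,\sigma_n(r;\d y) = \E^x\Big[e^{-\frac{\mu^2}{2}\tau_D}\Big]\,\sigma_n(r;\d y).
$$
It then remains to identify the Laplace transform $\E^x[e^{-\mu^2\tau_D/2}]$. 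Since $\|W-x\|$ is a Bessel process of dimension $n$ (index $\nu=(n-2)/2$) started at $0$ and $\tau_D$ is its first passage time to level $r$, this is exactly the classical Bessel first-passage Laplace transform, which equals $\psi_n(\mu r)$ with $\psi_n$ as in (\ref{eq:psi-def}); I would quote this from the Bessel section of Borodin and Salminen \cite{borodin-salminen}. This closes (a), and (b) follows as explained above.

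For the converse (c), assume $u$ is continuous (as is implicit for the spherical integrals to be defined) and satisfies the average property for all balls $B_n(x,r)\subset D$. The first step is a regularity bootstrap by mollification: for a radial bump $\phi(z)=g(\|z\|)$ supported in a small ball, writing the solid integral in polar coordinates and applying the average property on each sphere gives $(u*\phi)(x)=u(x)\int g(\rho)\rho^{n-1}\omega_{n-1}\,\psi_n(\mu\rho)^{-1}\,\d\rho$, with $\omega_{n-1}$ the area of the unit sphere; choosing $g$ so that this constant equals $1$ yields $u*\phi=u$ on an interior subdomain, whence $u\in C^\infty$ there. Once $u$ is smooth I would expand both factors of the average property in $r$: the series for $I_\nu$ gives $\psi_n(\mu r)=1-\frac{(\mu r)^2}{2n}+O(r^4)$, while the standard spherical-mean expansion gives $\int_{\p B_n(x,r)}u\,\sigma_n(r;\d y)=u(x)+\frac{r^2}{2n}\Delta u(x)+O(r^4)$. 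Multiplying and matching the coefficient of $r^2$ forces $\Delta u(x)-\mu^2 u(x)=0$ at every $x\in D$, i.e.\ $u$ is $\mu$-panharmonic.

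The main obstacle is concentrated in step (c): upgrading the mere average property to $C^2$ regularity before the Taylor-expansion computation can be run. The mollification identity above is what makes this work, and it relies on the precise radial weight $\psi_n(\mu r)$, not merely on its small-$r$ asymptotics. The other point requiring care is the exact classical form of the Bessel first-passage Laplace transform in step (a); everything else is bookkeeping with (\ref{eq:rep1}) and the rotational symmetry of Brownian motion.
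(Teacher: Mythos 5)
Your proposal is correct, and its part (a) coincides with the paper's own argument: the paper also starts the Brownian motion at the center, uses rotational symmetry to conclude that the exit place is uniform on $\p B_n(x,r)$ and independent of the exit time, factors $H_\mu^x(B_n(x,r);\d y)=\E^x\bigl[e^{-\frac{\mu^2}{2}\tau}\bigr]\sigma_n(r;\d y)$, and then identifies the Laplace transform of the Bessel first-passage time --- quoting Wendel's Theorem 4 where you quote Borodin--Salminen; the two references give the same formula, namely $\psi_n(\mu r)$. The genuine difference is that the paper stops there, ending with ``The claim follows from this'': it establishes only the measure identity and hence the ``only if'' direction (your parts (a) and (b)), leaving the converse --- that the average property forces $u$ to be $\mu$-panharmonic --- unaddressed. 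Your part (c) supplies exactly this missing half: the mollification identity $u\ast\phi=u$ (with the radial weight $\psi_n(\mu\rho)^{-1}$ built into the normalization of $\phi$) bootstraps a merely continuous $u$ to $C^\infty$, after which the expansions $\psi_n(\mu r)=1-\frac{(\mu r)^2}{2n}+O(r^4)$ and $\int_{\p B_n(x,r)}u\,\d\sigma_n = u(x)+\frac{r^2}{2n}\Delta u(x)+O(r^4)$ match at order $r^2$ to give $\Delta u=\mu^2 u$; both expansions check out (the first since $4(\nu+1)=2n$). So your proof is strictly more complete than the paper's, at the modest cost of adding an explicit continuity (or local integrability) hypothesis on $u$, which is anyway needed for the spherical averages in the statement to make sense and which the paper glosses over.
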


\begin{rem}
Theorem \ref{thm:panharmonic_average} states that $\psi_n(\mu r)$ is the Radon--Nikodym derivative:
$$
\psi_n(\mu r) = Z_\mu^x\left(B_n(x,r); y\right) 
= \E^x\left[e^{-\frac{\mu^2}{2}\tau_{B_n(x,r)}}\Big| W\left(\tau_{B_n(x,r)}\right)=y\right].
$$
\end{rem}

\begin{proof}[Proof of Theorem \ref{thm:panharmonic_average}]
Note that we may assume that $x=0$.

Denote by $\tau_r^n$ the first hitting time of the Brownian motion $W$ on the boundary $\p B_n(0,r)$. I.e., $\tau_r^n$ is identical in law with the first hitting time of the Bessel process with index $\nu=(n-2)/2$ reaches the level $r$ when it starts from zero.

From the rotation symmetry of the Brownian motion it follows that the hitting place is uniformly distributed on $\p B_n(0,r)$ for all hitting times $t$. Consequently, by Theorem \ref{thm:kakutani} and the independence of the hitting time $\tau_r^n$ and place $W(\tau_r^n)$
\begin{eqnarray*}
H_\mu^0\left(B_n(0,r); \d y\right) 
&=& \E^0\left[e^{-\frac{\mu^2}{2}\tau_r^n}; W(\tau_r^n)\in \d y\right] \\
&=& \E^0\left[e^{-\frac{\mu^2}{2}\tau_r^n}\right] \P^0\left[W(\tau_r^n)\in \d y\right] \\
&=& \E^0\left[e^{-\frac{\mu^2}{2}\tau_r^n}\right] \sigma_n(r;\d y).
\end{eqnarray*}
The hitting time distributions for the Bessel process are well-known.  By, e.g., Wendel \cite[Theorem 4]{wendel},
$$
\E^0\left[e^{-\frac{\mu^2}{2}\tau_r^n}\right] =
\frac{(\mu r)^\nu}{2^\nu\Gamma(\nu+1)I_\nu(\mu r)}.
$$
The claim follows from this.
\end{proof}

\begin{rem}\label{rem:psifun}
The Radon--Nikodym derivative, or the `killing constant', $\psi_n(\mu)$ is rather complicated.  However, some of its properties are easy to see:
\begin{enumerate}
\item $\psi_n(\mu)$ is continuous in $\mu$,
\item $\psi_n(\mu)$ is strictly decreasing in $\mu$,
\item $\psi_n(\mu) \to 0$ as $\mu\to\infty$,
\item $\psi_n(\mu) \to 1$ as $\mu\to 0$,
\item $\psi_n(\mu)$ is increasing in $n$. 
\end{enumerate} 
The items (i)--(iv) are clear since $\psi_n(\mu)$ is the probability that an exponentially killed Brownian motion started from the origin with killing intensity $\mu^2/2$ is not killed before it hits the boundary of the unit ball. A non-probabilistic argument for (i)--(iv) is to note that
$$
\psi_n(\mu r) = \E^0\left[e^{-\frac{\mu^2}{2}\tau_r^n}\right].
$$
and use the monotone convergence.
The item (v) is somewhat surprising: the higher the dimension $n$, the more likely it is for the killed Brownian motion to survive to the boundary of the unit ball. A possible intuitive explanation is that the higher the dimension the more transitive the unit ball is combined with the remarkable result by Ciesielski and Taylor \cite{ciesielski-taylor} that probability distribution for the total time spent in a ball by $(n+2)$-dimensional Brownian motion is the same as the probability distribution of the hitting time of $n$-dimensional Brownian motion on the boundary of the ball.      
\end{rem}

\bigskip
\begin{center}
\includegraphics[width=\textwidth]{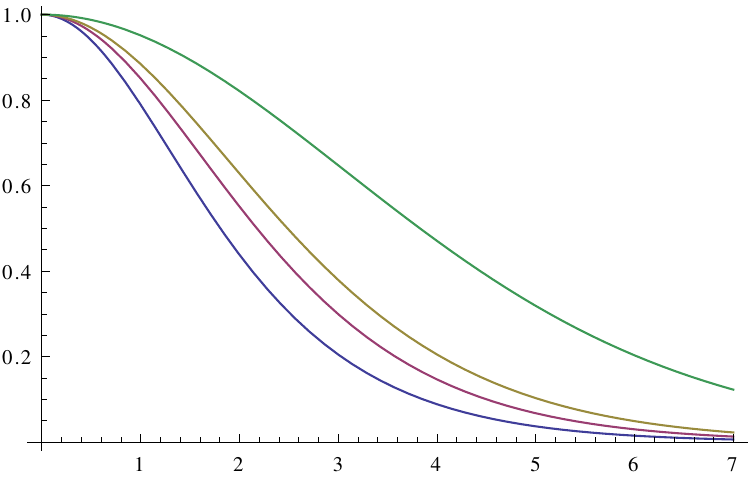}
Function $\psi_n$ with (from bottom to top) $n=2,3,4,10$.
\end{center}
\bigskip

\begin{cor}\label{cor:liouville}
Let $u$ be panharmonic on the entire space $\R^n$. If $u$ is bounded, then $u$ is constant. 
\end{cor}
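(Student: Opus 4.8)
The plan is to read off everything from the average property in Theorem \ref{thm:panharmonic_average}, which represents $u(x)$ as the uniform spherical mean of $u$ over $\p B_n(x,r)$ scaled by the killing constant $\psi_n(\mu r)$. Write $M=\sup_{z\in\R^n}|u(z)|<\infty$, finite by hypothesis. Since $u$ is panharmonic on all of $\R^n$, every ball $B_n(x,r)$ lies in the domain, so the theorem is available for all $r>0$. I would split into the cases $\mu>0$ and $\mu=0$, the former being the genuinely new content and the latter the classical Liouville theorem.

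First I would treat $\mu>0$. Fixing $x\in\R^n$ and applying Theorem \ref{thm:panharmonic_average}, together with the fact that $\sigma_n(r;\cdot)$ is a probability measure, gives
\[
|u(x)| = \psi_n(\mu r)\left|\int_{y\in\p B_n(x,r)} u(y)\,\sigma_n(r;\d y)\right| \le \psi_n(\mu r)\, M .
\]
Letting $r\to\infty$, so that $\mu r\to\infty$, and invoking item (iii) of the preceding remark ($\psi_n\to 0$ at infinity), I conclude $u(x)=0$. As $x$ is arbitrary, $u\equiv 0$; in particular $u$ is constant. Note this gives the stronger statement that a bounded $\mu$-panharmonic function on $\R^n$ must vanish identically, which is consistent with the fact that for $\mu>0$ the only constant that is $\mu$-panharmonic is $0$.

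It remains to handle $\mu=0$, where panharmonic means harmonic and the bound above is vacuous because $\psi_n(0)=1$ by item (iv). Here I would first upgrade the spherical mean $u(x)=\int_{\p B_n(x,r)}u\,\sigma_n(r;\d y)$ to the solid mean by integrating in $r$ against the surface-area weight over $0<r<R$, obtaining
\[
u(x) = \frac{1}{|B_n(x,R)|}\int_{B_n(x,R)} u(z)\,\d z .
\]
Then, for two points $x,x'$, I would subtract the solid means over balls of the same large radius $R$; the contributions over the overlap $B_n(x,R)\cap B_n(x',R)$ cancel, leaving
\[
|u(x)-u(x')| \le \frac{M}{|B_n(0,R)|}\,\bigl|B_n(x,R)\triangle B_n(x',R)\bigr| .
\]
The symmetric difference has volume of order $R^{n-1}|x-x'|$ while $|B_n(0,R)|$ is of order $R^{n}$, so the right-hand side is $O(|x-x'|/R)\to 0$ as $R\to\infty$, forcing $u(x)=u(x')$ and hence $u$ constant.

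The $\mu>0$ case is essentially immediate once Theorem \ref{thm:panharmonic_average} and the decay $\psi_n(\mu r)\to 0$ are in hand, so I expect no real obstacle there. The main (and entirely classical) work sits in the $\mu=0$ case, whose only delicate point is the uniform geometric estimate on the symmetric difference of two large equal-radius balls; this is routine and requires none of the panharmonic machinery.
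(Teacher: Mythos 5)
Your proof is correct, but it takes a genuinely different route from the paper's for the main case. The paper runs a single argument for all $\mu\ge 0$: it applies Theorem \ref{thm:panharmonic_average} at the two poles $x$ and $0$ with the same radius $r$, bounds $|u(x)-u(0)|$ by $\|u\|_\infty$ times the ratio of the volume of the symmetric difference of $B_n(x,r)$ and $B_n(0,r)$ to the volume of $B_n(0,r)$, and lets $r\to\infty$ --- that is, essentially the comparison argument you reserve for $\mu=0$. Your treatment of $\mu>0$ is different and arguably better: using only the decay $\psi_n(\mu r)\to 0$ you get $u\equiv 0$ from a one-point estimate, which is both simpler (no geometry of overlapping balls is needed) and stronger than the stated conclusion, since for $\mu>0$ the zero function is indeed the only panharmonic constant. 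Your $\mu=0$ case is also more careful than the paper's version of the same idea: the paper applies the symmetric-difference bound directly to a difference of two \emph{spherical} integrals, where it is not literally valid (the surface measure of the symmetric difference of two distinct spheres of equal radius is not small), whereas your integration in $r$ to pass to solid means is precisely the repair that makes that step rigorous. What the paper's route buys is uniformity --- one computation covering every $\mu\ge 0$ and yielding exactly the statement ``$u$ is constant''; what yours buys is the sharper vanishing statement in the genuinely new case $\mu>0$, together with a watertight classical Liouville argument in the harmonic case.
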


\begin{proof}
By Theorem \ref{thm:panharmonic_average}
\begin{eqnarray*}
\lefteqn{\left|u(x)-u(0)\right|} \\
&=&
\left|\psi_n(\mu r)\int_{\p B_n(x,r)} u(y)\, \, \sigma_n(r;\d y)
- \psi_n(\mu r)\int_{\p B_n(0,r)} u(y) \, \, \sigma_n(r;\d y) 
\right| \\
&\le&
\left|\psi_n(\mu r)\int_{\p B_n(x,r)} u(y)\, \, \sigma_n(r;\d y)\right| +
\left|\psi_n(\mu r)\int_{\p B_n(0,r)} u(y)\, \, \sigma_n(r;\d y)\right| \\
&\le&
2\psi_n(\mu r){\| u\|}_{\infty},
\end{eqnarray*}
which tends to $0$ as $r\to\infty$ by property (iii) or Remark \ref{rem:psifun}.
\end{proof}


\section{Discussion on Extensions and Simulation} \label{sect:discussion}

The Yukawa equation (\ref{eq:yukawa}) is a special case of the Schr\"odinger equation
\begin{equation}\label{eq:schrodinger}
\Delta u(x) = q(x)u(x).
\end{equation}
The Schr\"odinger equation and its connection to the Brownian motion has been studied e.g. by Chung and Zhao \cite{chung-zhao}.  Our investigation here can be seen as a special case. For example, analogs of Theorem \ref{thm:kakutani} and Corollary \ref{cor:yukawa-killing} are known for the Schr\"odinger equation. However, analogs of the Duffin correspondence (\ref{eq:harmonic_duffin}) and Corollary \ref{cor:yukawa-escaping} are not known even to exist. Moreover, the results given here cannot easily be calculated for the Schr\"odinger equation. The problem is that the prospective Radon--Nikodym derivate of the measure associated with the solutions of the Schr\"odinger equation with respect to the harmonic measure takes the form
\begin{equation}\label{eq:radon-nikodym-schrodinger}
Z_q^x(D;y) = \E^x\left[ e_q(\tau_D) \big| W(\tau_D) =y \right],
\end{equation}
where 
$$
e_q(t) = e^{-\frac12\int_0^t q(W(s))\,\d s}
$$
is the so-called \emph{Feynman--Kac functional}.  Thus, we see that in order to calculate the Radon--Nikodym derivative we need to know the joint density of the Feynman--Kac functional and the Brownian motion when the Brownian motion hits the boundary $\p D$. If $q$ is constant, i.e., we have either the Yukawa equation or the Helmholtz equation, then it is enough to know the joint distribution of the hitting time and place of the Brownian motion on the boundary $\p D$.  These distributions are well-studied, see e.g. \cite{borodin-salminen,ciesielski-taylor,hsu,kent,levy}, but few joint distributions involving the Feynman--Kac functionals are known.


In addition to the Yukawa equation, the other important special case of the Schr\"odinger equation (\ref{eq:schrodinger}) is the Helmholtz equation,
\begin{equation}\label{eq:helmholtz}
\Delta u(x) = -\lambda u(x), \quad \lambda\ge 0.
\end{equation}
It is possible to provide a Duffin correspondence for the Helmholtz equation also.  Indeed, e.g., setting
$$
\bar u(\bar x) = \bar u(x,\tilde x) = u(x)\cosh(\lambda \tilde x)
$$
provides a correspondence (see \cite{yrs1} for details).
Thus, our results extend in a straightforward manner to the Helmholtz equation \eqref{eq:helmholtz} for domains that are small enough with respect to the creation parameter $\lambda$ so that the associated Feynman--Kac functional is finite:
\begin{equation}\label{eq:gauge}
\E^x\left[e^{\frac{\lambda}{2}\tau_D}\right] < \infty.
\end{equation}

Finally, we note Theorem \ref{thm:kakutani}, Corollary \ref{cor:yukawa-killing} and Corollary \ref{cor:yukawa-escaping} give three different ways to simulate the panharmonic measures.  Indeed, in \cite{yrs1,yrs2} the classical Walk On Spheres algorithm due to Muller \cite{muller} was extended for the Yukawa PDE, and also for the Helmholtz PDE, by using the results mentioned above.

\end{document}